\theoremstyle{plain}
\newtheorem{theorem}{Theorem}
\newtheorem*{proposition*}{Proposition}
\newtheorem{lemma}[theorem]{Lemma}
\newtheorem{lemmma*}{Lemma}
\newtheorem{corollary}[theorem]{Corollary}
\theoremstyle{definition}
\newtheorem*{definition*}{Definition}
\newtheorem{remark*}{Remark}
\newcommand{\Irr}{\mathrm{Irr}}
\newcommand{\pchar}{\theta}
\newcommand{\const}{c}
\newcommand{\bZ}{\mathbf Z}
\renewcommand{\S}{S}
\newcommand{\A}{A}
\numberwithin{equation}{section}
\numberwithin{theorem}{section}
\numberwithin{table}{section}
\begin{document}
\title[Covering numbers for characters]{Covering numbers for characters of symmetric groups}
\author[A.\ R.\ Miller]{Alexander Rossi Miller}
\address{Center for Communications Research, Princeton}
\begin{abstract}
If $n>4$ and $c(\theta)$ denotes the set of irreducible 
constituents of a character $\theta$, then $c(\chi^k)=\Irr(S_n)$ 
for all nonlinear $\chi\in\Irr(S_n)$ if and only if $k\geq n-1$.
\end{abstract}
\maketitle
\thispagestyle{empty}
\section{Introduction}
Let $\chi$ denote a character of a finite group $G$.
Let $c(\chi)$ denote the set of irreducible constituents of $\chi$.

Burnside \cite{Bu} proved that if $\chi$ is faithful,
then the constituents of the various powers of $\chi$ cover the set of irreducible characters of $G$ in the sense~that 
\begin{equation}\label{b cover}
  c(\chi)\cup c(\chi^2)\cup \ldots =\Irr(G).
\end{equation}
When \eqref{b cover} holds, we denote by $d(\chi)$ the smallest positive integer such that
\begin{equation}
  c(\chi)\cup c(\chi^2)\cup\ldots \cup c(\chi^{d(\chi)})=\Irr(G).
\end{equation}
It is not always true that $\Irr(G)=c(\chi^N)$ for some $N$ when $\chi$ is faithful, as can be seen by taking any faithful irreducible character of a nontrivial cyclic group. 
Steinberg and Brauer gave independent
proofs of Burnside's theorem \cite{Br,St}. Brauer's proof supplies  
the bound $d(\chi)\leq |\{\chi(g): g \in G\}|$,
but little else is known about these numbers in general.

If $\chi$ has trivial center, 
then by Lemma~\ref{lemma e exists}, all irreducible characters of $G$ appear in a single power of~$\chi$ in the
sense that, for some $N$,
\begin{equation}\label{p cover}
  c(\chi^N)=\Irr(G).
\end{equation}
When \eqref{p cover} holds, we denote by $e(\chi)$ the 
smallest positive integer such that
\begin{equation}
  c(\chi^{e(\chi)})=\Irr(G).
\end{equation}
For example, if $G$ is the dihedral group with $14$ elements,
then each nonlinear $\chi\in\Irr(G)$ has trivial center and satisfies ${e(\chi)=6}$ and $d(\chi)=3$. 

All irreducible characters $\chi$ of $\S_n$ with $\chi(1)>1$ have trivial center when $n>4$,
so for each of these characters of $\S_n$ the numbers
$d(\chi)$ and $e(\chi)$ exist.
The $e(\chi)$'s for $\S_n$ have been studied asymptotically and for very special
choices of $\chi$, but remain unknown in almost all cases.
In this paper, we consider the $e(\chi)$'s (resp.\ $d(\chi)$'s)
all together and find the explicit maximum.
These two maximum values are the smallest positive integers $N$ and $M$
such that every nonlinear  $\chi\in\Irr(\S_n)$ satisfies 
$c(\chi^N)=\Irr(\S_n)$ and $c(\chi)\cup c(\chi^2)\cup\ldots\cup c(\chi^M)=\Irr(\S_n)$. We call these \emph{covering numbers}.

\begin{theorem}\label{main theorem}
  Let $n>4$. Let $X=\{\chi\in\Irr(\S_n) : \chi(1)>1\}$. 
  \begin{enumerate}[\rm (i)\ ]
    \item $c(\chi^k)=\Irr(\S_n)$ for all $\chi\in X$ if and only if $k\geq n-1$.
    \item $c(\chi)\cup c(\chi^2)\cup\ldots \cup c(\chi^k)=\Irr(\S_n)$ for all $\chi\in X$ if and only if $k\geq n-1$.
    \end{enumerate}
  \end{theorem}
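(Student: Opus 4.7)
Since sufficiency of (i) implies that of (ii) and necessity of (ii) implies that of (i), it suffices to prove: (N) for some $\chi \in X$, $c(\chi) \cup c(\chi^2) \cup \cdots \cup c(\chi^{n-2}) \ne \Irr(\S_n)$; and (S) $c(\chi^{n-1}) = \Irr(\S_n)$ for every $\chi \in X$.

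For (N), I would take $\chi = \chi^{(n-1,1)}$ and use $\chi = \pi - 1$, where $\pi = \Ind_{S_{n-1}}^{S_n} 1$. The Mackey identity $\pi \otimes \chi^\lambda = \Ind_{S_{n-1}}^{S_n}(\chi^\lambda|_{S_{n-1}})$ together with the branching rule shows that tensoring with $\chi$ corresponds to a single ``cell move'' in Young's lattice (remove a corner, add a corner), plus possible copies of $\chi^\lambda$ itself. Iterating from $(n)$, the constituents of $\chi^k$ are exactly the $\chi^\nu$ reachable from $(n)$ in at most $k$ cell-moves, i.e., those with $\nu_1 \ge n-k$. Since $\epsilon = \chi^{(1^n)}$ has $\nu_1 = 1$, it fails to appear in $c(\chi) \cup \cdots \cup c(\chi^{n-2})$.

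For (S), I would induct on $n \ge 5$. The extreme cases $\chi \in \{\chi^{(n-1,1)}, \chi^{(2,1^{n-2})}\}$, related by the sign twist $\chi^{(2,1^{n-2})} = \epsilon \cdot \chi^{(n-1,1)}$, are immediate from the cell-move description (since $(n-1)$ cell-moves from $(n)$ reach any partition of $n$). For any other $\chi = \chi^\lambda \in X$, the branching rule produces a nonlinear faithful constituent $\chi^\mu$ in $\chi|_{S_{n-1}}$ (obtained by removing an ``interior'' corner of $\lambda$). By the inductive hypothesis (or, in the base case $n = 5$, by the cell-move argument applied to the faithful characters $\chi^{(3,1)}$ or $\chi^{(2,1,1)}$ of $\S_4$), one has $c((\chi^\mu)^{n-2}) = \Irr(\S_{n-1})$. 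The identity $\langle \phi, (\chi^\mu)^{n-1} \rangle = \langle \phi \cdot \chi^\mu, (\chi^\mu)^{n-2} \rangle$ (using that $\chi^\mu$ is real) upgrades this to $c((\chi^\mu)^{n-1}) = \Irr(\S_{n-1})$, since $\phi \cdot \chi^\mu$ is a nonzero character and every one of its constituents lies in $c((\chi^\mu)^{n-2})$. Next, a binomial expansion gives $(\chi|_{S_{n-1}})^{n-1} - (\chi^\mu)^{n-1}$ as a genuine character, so $c((\chi|_{S_{n-1}})^{n-1}) = \Irr(\S_{n-1})$; and Frobenius reciprocity $\langle \chi^{n-1}, \psi \rangle_{\S_n} = \langle (\chi|_{S_{n-1}})^{n-1}, \psi|_{S_{n-1}} \rangle_{\S_{n-1}}$ finally delivers $c(\chi^{n-1}) = \Irr(\S_n)$.

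The main obstacle is the base case $n = 5$: the theorem's conclusion fails for $\S_4$ (the character $\chi^{(2,2)}$ is not faithful, and in fact $\chi^{(3,1)} \notin c((\chi^{(2,2)})^k)$ for any $k$), so no ``theorem for $\S_4$'' can be invoked. The remedy is to verify directly that every $\chi \in X_5 \setminus \{\chi^{(4,1)}, \chi^{(2,1,1,1)}\}$ has $\chi|_{S_4}$ containing one of the faithful characters $\chi^{(3,1)}$ or $\chi^{(2,1,1)}$, for which the cell-move argument gives $c((\chi^\mu)^3) = \Irr(\S_4)$ unconditionally. A short check of the three branchings $\chi^{(3,2)}{\mid} = \chi^{(2,2)}+\chi^{(3,1)}$, $\chi^{(3,1,1)}{\mid} = \chi^{(2,1,1)}+\chi^{(3,1)}$, $\chi^{(2,2,1)}{\mid} = \chi^{(2,1,1)}+\chi^{(2,2)}$ confirms this, launching the induction.
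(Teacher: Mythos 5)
Your necessity argument is sound and is essentially the paper's: writing $\chi_{(n-1,1)}=\theta_{n,1}-1$ forces $c(\chi_{(n-1,1)}^k)\subseteq c(\theta_{n,1}^k)=\{\chi_\nu:\nu_1\geq n-k\}$, which misses the sign character for all $k\leq n-2$. The sufficiency argument, however, breaks at its final step. The identity you call Frobenius reciprocity, $\langle \chi^{n-1},\psi\rangle_{\S_n}=\langle (\chi|_{\S_{n-1}})^{n-1},\psi|_{\S_{n-1}}\rangle_{\S_{n-1}}$, is false: Frobenius reciprocity pairs an \emph{induced} character against a restricted one, and what it actually gives here (via $\Ind\circ\Res=$ multiplication by $\theta_{n,1}$) is $\langle \theta_{n,1}\chi^{n-1},\psi\rangle_{\S_n}=\langle (\chi^{n-1})|_{\S_{n-1}},\psi|_{\S_{n-1}}\rangle_{\S_{n-1}}$. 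More fundamentally, the implication you need --- that if $\Theta|_{\S_{n-1}}$ contains every irreducible of $\S_{n-1}$ then $\Theta$ contains every irreducible of $\S_n$ --- is simply not true: take $\Theta=\rho-\chi_{\nu_0}(1)\chi_{\nu_0}$ for any $\nu_0$; since every partition of $n-1$ has at least two addable boxes, $\Theta|_{\S_{n-1}}$ still contains all of $\Irr(\S_{n-1})$, yet $\chi_{\nu_0}\notin c(\Theta)$. What your induction actually delivers is $c(\theta_{n,1}\chi^{n-1})=\Irr(\S_n)$, i.e., that every irreducible lies within one box-move of a constituent of $\chi^{n-1}$ --- strictly weaker than what is claimed, and the loss of one box-move would compound at every stage of the induction.

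A second gap concerns the ``extreme cases.'' The assertion that $c(\chi_{(n-1,1)}^k)$ consists \emph{exactly} of the $\chi_\nu$ with $\nu_1\geq n-k$ is false as stated (e.g.\ $\chi_{(n)}\notin c(\chi_{(n-1,1)})$ although $n\geq n-1$); the cell-move picture really computes $\bigcup_{j\leq k}c(\chi_{(n-1,1)}^j)\cup\{\chi_{(n)}\}$, i.e.\ it controls $d(\chi)$ but not $e(\chi)$. Showing that the \emph{single} power $\chi_{(n-1,1)}^{n-1}$ covers $\Irr(\S_n)$ --- in particular that it contains the sign character, which sits at the exact boundary $\nu_1=1=n-(n-1)$ --- needs a genuine argument; the paper devotes Lemma~\ref{non-rectangle} together with a separate parity analysis (the $\rho-\zeta$ computation when $n$ is even) to precisely this point, and your base case $n=5$ inherits the same problem for $\chi_{(4,1)}$ and $\chi_{(2,1,1,1)}$. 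For comparison, the paper avoids induction on $n$ entirely: it proves $c(\theta_{n,2})\subseteq c(\chi_\lambda^2)$ for non-rectangular $\lambda$ and $c(\theta_{n,5})\subseteq c(\chi_\lambda^4)$ for rectangular $\lambda$, and then lets powers of these induced characters sweep out $\Irr(\S_n)$ via Lemma~\ref{theta move}. Your branching-rule induction is an attractive alternative in outline, but as written both the transfer from $\S_{n-1}$ back up to $\S_n$ and the treatment of the hooks $(n-1,1)$ and $(2,1^{n-2})$ are unproven.
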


\section{Supplemental notation, basic lemmas, and remarks}
\begin{lemma}\label{c multiply}
  Let $\chi,\varphi$ be characters of a finite group $G$.
  \begin{enumerate}[\rm (i)]
  \item If $c(\chi)=c(\varphi)$, then 
    $c(\psi\chi)=c(\psi\varphi)$
    for every character $\psi$ of $G$.
  \item If $c(\chi)=\Irr(G)$, then
  $c(\psi\chi)=\Irr(G)$
  for every character $\psi$ of $G$.
  \end{enumerate}
\end{lemma}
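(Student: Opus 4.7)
The plan is to prove (i) by unpacking what $c(\chi)=c(\varphi)$ means at the level of coefficients in the irreducible decomposition, and then to deduce (ii) from (i) by specializing $\varphi$ to the regular character of $G$.

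For (i), I would decompose $\chi=\sum_{\eta\in\Irr(G)}a_\eta\eta$ and $\varphi=\sum_{\eta\in\Irr(G)}b_\eta\eta$ with nonnegative integers $a_\eta,b_\eta$; the hypothesis $c(\chi)=c(\varphi)$ is precisely that $\{\eta:a_\eta>0\}=\{\eta:b_\eta>0\}$. Distributing and using bilinearity of the inner product, for any $\xi\in\Irr(G)$ we have
\[
  \langle\psi\chi,\xi\rangle=\sum_{\eta\in\Irr(G)} a_\eta\,\langle\psi\eta,\xi\rangle.
\]
Since all the $a_\eta\langle\psi\eta,\xi\rangle$ are nonnegative, $\xi\in c(\psi\chi)$ if and only if there exists some $\eta$ with $a_\eta>0$ (equivalently, $\eta\in c(\chi)$) and $\langle\psi\eta,\xi\rangle>0$. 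This condition depends on $\chi$ only through $c(\chi)$, so replacing $\chi$ with $\varphi$ produces the same set of $\xi$, giving $c(\psi\chi)=c(\psi\varphi)$.

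For (ii), I would invoke (i) with $\varphi$ equal to the regular character $\rho_G$ of $G$, noting $c(\rho_G)=\Irr(G)=c(\chi)$. Since $\rho_G(g)=0$ for $g\neq 1$, the formula $(\psi\rho_G)(g)=\psi(g)\rho_G(g)$ collapses to $\psi\rho_G=\psi(1)\rho_G$, so $c(\psi\rho_G)=c(\rho_G)=\Irr(G)$; combined with (i) this yields $c(\psi\chi)=\Irr(G)$. The only mildly delicate point is keeping track of the nonnegativity of the $a_\eta$, so that a sum of inner products is positive iff some summand is; aside from that bookkeeping, both statements are immediate.
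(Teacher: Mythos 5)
Your proposal is correct and follows essentially the same route as the paper: part (i) rests on the observation that $c(\psi\chi)$ depends only on the sets of constituents (the paper writes this as $c(\psi\chi)=\bigcup_{\alpha,\beta}c(\alpha\beta)$ over $\alpha\in c(\psi)$, $\beta\in c(\chi)$, while you phrase it via nonnegativity of the multiplicities, which is the same point), and part (ii) specializes to the regular character using $\psi\rho=\psi(1)\rho$ exactly as the paper does. No gaps.
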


\begin{proof}
  (i) 
  $c(\psi\chi)=\bigcup_{\alpha,\beta} c(\alpha\beta)$ where $\alpha\in c(\psi)$ and $\beta\in c(\chi)$.
  (ii) 
  Let $\rho$ be the character of the regular representation. If $c(\chi)=\Irr(G)$, then by the first part we have
  $c(\psi\chi)=c(\psi\rho)=c(\psi(1)\rho)=\Irr(G)$.
\end{proof}

\begin{corollary}\label{stab cor}
  If $c(\chi^n)=\Irr(G)$, then $c(\chi^k)=\Irr(G)$ for all $k\geq n$.
\end{corollary}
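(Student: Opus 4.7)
The plan is to apply Lemma~\ref{c multiply}(ii) essentially directly. Given the hypothesis $c(\chi^n)=\Irr(G)$, for any $k\geq n$ I would write $\chi^k=\psi\cdot\chi^n$ where $\psi:=\chi^{k-n}$. When $k>n$, this $\psi$ is a bona fide character of $G$ (a product of copies of the character $\chi$); part (ii) of the lemma, applied with $\chi^n$ playing the role of ``$\chi$'' there, then yields $c(\chi^k)=c(\psi\chi^n)=\Irr(G)$. The boundary case $k=n$ is the hypothesis itself (and would also be covered if one allowed $\psi$ to be the trivial character $1_G$).

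I do not anticipate any genuine obstacle: the corollary just records the stability already encoded in Lemma~\ref{c multiply}(ii) — once some power of $\chi$ has every irreducible as a constituent, multiplying by any further character (in particular by additional copies of $\chi$) cannot destroy this property. So the whole argument fits in one or two lines once the product decomposition $\chi^k=\chi^{k-n}\chi^n$ is written down.
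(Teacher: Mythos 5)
Your proposal is correct and matches the paper's proof, which simply cites Lemma~\ref{c multiply}(ii); the intended argument is exactly your decomposition $\chi^k=\chi^{k-n}\chi^n$. Your handling of the boundary case $k=n$ is a reasonable extra precaution but adds nothing essential.
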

\begin{proof}
  This follows from Lemma~\ref{c multiply}.
\end{proof}

For a character $\chi$ of a finite group $G$, let $\bZ(\chi)=\{g\in G: |\chi(g)|=\chi(1)\}$.

\begin{lemma}\label{lemma e exists}
  If $\chi$ is a character of a finite group $G$ and 
  $|\bZ(\chi)|=1$, then $c(\chi^n)=\Irr(G)$ for some $n$.
\end{lemma}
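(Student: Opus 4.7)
The plan is to bound the multiplicity $\langle \chi^n,\psi\rangle$ from below for every $\psi \in \Irr(G)$ simultaneously, by isolating the identity term in the second orthogonality relation and showing that the remainder is dominated by it once $n$ is large.

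First I would write, by orthogonality,
\[
  \langle \chi^n,\psi\rangle \;=\; \frac{1}{|G|}\chi(1)^n\psi(1)\;+\;\frac{1}{|G|}\sum_{g\neq 1}\chi(g)^n\overline{\psi(g)}.
\]
The hypothesis $|\bZ(\chi)|=1$ says precisely that $M:=\max_{g\neq 1}|\chi(g)|$ is strictly less than $\chi(1)$. Combined with the trivial bound $|\psi(g)|\leq\psi(1)$, the triangle inequality gives
\[
  \left|\sum_{g\neq 1}\chi(g)^n\overline{\psi(g)}\right|\;\leq\;\psi(1)(|G|-1)M^n,
\]
so that
\[
  \langle \chi^n,\psi\rangle \;\geq\; \frac{\psi(1)}{|G|}\bigl(\chi(1)^n-(|G|-1)M^n\bigr).
\]

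Since $M/\chi(1)<1$, the right-hand side is positive as soon as $(|G|-1)(M/\chi(1))^n<1$, which is a condition on $n$ depending only on $\chi$ and $|G|$. For any such $n$, the nonnegative integer $\langle\chi^n,\psi\rangle$ is positive for every $\psi\in\Irr(G)$, and therefore $c(\chi^n)=\Irr(G)$.

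There is no real obstacle; the only point to watch is the uniformity in $\psi$. Because $\psi(1)$ factors out of both the main term and the error bound, the threshold on $n$ does not involve $\psi$, so a single $n$ handles all irreducibles at once. In particular, one does not need to invoke Burnside's theorem character by character and then glue the exponents together via Corollary~\ref{stab cor}; the elementary estimate above delivers the uniform conclusion directly.
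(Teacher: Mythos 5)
Your proof is correct and follows essentially the same route as the paper: both isolate the identity term $\chi(1)^n\psi(1)/|G|$ in $\langle\chi^n,\psi\rangle$ and use $|\chi(g)|<\chi(1)$ for $g\neq 1$ (the meaning of $|\bZ(\chi)|=1$) to show the remaining sum is eventually dominated. Your version just makes the estimate explicit via $M=\max_{g\neq 1}|\chi(g)|$ and notes the uniformity in $\psi$, which the paper leaves implicit since $\Irr(G)$ is finite.
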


\begin{proof}
  If $\chi$ satisfies $|\bZ(\chi)|=1$, then for any
  $\theta\in\Irr(G)$, the inner product $\langle \chi^k,\theta\rangle$
  is asymptotic to $\chi(1)^k\theta(1)/|G|$ as $k\to\infty$, since
  \[\langle \chi^k,\theta\rangle\frac{|G|}{\chi(1)^k\theta(1)}=1+\sum_{g\neq 1}\frac{\chi(g)^k}{\chi(1)^k}\frac{\overline{\theta(g)}}{\theta(1)}\to 1 \text{ as }k\to\infty,\]
  so $\langle \chi^k,\theta\rangle$ tends to infinity as $k\to\infty$.
\end{proof}

\begin{lemma}\label{triv center}
  Let $\chi$ be a character of a finite group $G$.
  \begin{enumerate}[\rm (a)]
  \item If $|\bZ(G)|=1$, then $|\bZ(\chi)|=1$ if and only if $\chi$ is faithful.
  \item If $\chi\in\Irr(G)$, then $|\bZ(\chi)|=1$ if and only if $|\bZ(G)|=1$ and $\chi$ is faithful.
  \end{enumerate}
\end{lemma}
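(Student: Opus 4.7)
The plan is to first establish the structural identification $\bZ(\chi)=\{g\in G:\rho(g)\text{ is a scalar matrix}\}$, where $\rho$ is a representation affording $\chi$. This is a one-line application of the triangle inequality to the eigenvalues of $\rho(g)$: these eigenvalues are roots of unity (since $\rho(g)$ has finite order), so their sum has absolute value $\chi(1)$ iff they all coincide, iff $\rho(g)=\lambda I$. Once this is in hand, $\bZ(\chi)$ is a normal subgroup of $G$ containing $\ker\chi$, and both parts follow quickly.

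For part (a), one direction is essentially free: if $|\bZ(\chi)|=1$ then $\ker\chi\subseteq\bZ(\chi)$ forces $\ker\chi=\{1\}$, so $\chi$ is faithful, and this uses nothing about $\bZ(G)$. For the converse I would take $g\in\bZ(\chi)$, observe that $\rho(g)$ being scalar makes it commute with every $\rho(h)$, so $\rho([g,h])=I$; faithfulness of $\chi$ then gives $[g,h]=1$ for all $h$, placing $g$ in $\bZ(G)$, which is trivial by hypothesis.

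For part (b), I would combine (a) with Schur's lemma: if $\chi\in\Irr(G)$ and $z\in\bZ(G)$, then $\rho(z)$ commutes with the whole image of $\rho$, so Schur forces $\rho(z)$ to be scalar, giving $\bZ(G)\subseteq\bZ(\chi)$. Hence if $|\bZ(\chi)|=1$ then both $\bZ(G)$ is trivial and (as in (a)) $\chi$ is faithful; the reverse implication is part (a) applied directly. I don't foresee a real obstacle here; the only delicate point is the initial identification of $\bZ(\chi)$ with the preimage of the scalar matrices, and once that is recorded the rest is routine diagram-chasing between $\ker\chi$, $\bZ(\chi)$, and $\bZ(G)$.
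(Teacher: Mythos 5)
Your proof is correct and follows essentially the same route as the paper: both directions of (a) hinge on $\ker\chi\subseteq\bZ(\chi)$ together with the fact that $\bZ(\chi)$ maps into the center modulo $\ker\chi$, and (b) reduces to (a) via the inclusion $\bZ(G)\subseteq\bZ(\chi)$ for irreducible $\chi$. The only difference is that you derive the standard facts from the scalar-matrix characterization of $\bZ(\chi)$ and Schur's lemma, whereas the paper simply cites $\bZ(\chi)/\ker\chi\subseteq\bZ(G/\ker\chi)$ and $\bZ(G)=\bigcap_{\theta\in\Irr(G)}\bZ(\theta)$.
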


\begin{proof}
  (a) Since $\bZ(\chi)/\ker \chi\subseteq \bZ(G/\ker \chi)$, we have ${|\bZ(\chi)|=1}$ if $|\bZ(G)|=|\ker \chi|=1$.
 The inclusion $\ker\chi\subseteq \bZ(\chi)$ gives the other direction. Part (b) follows from (a) since $\bZ(G)=\bigcap_{\theta\in\Irr(G)}\bZ(\theta)$.
\end{proof}

\begin{lemma}\label{triv center nonlinear}
  Let $n>4$ and let $\chi$ be an irreducible character of $S_n$. Then
  $|\bZ(\chi)|=1$ if and only if $\chi(1)>1$. 
\end{lemma}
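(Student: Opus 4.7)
The plan is to reduce to Lemma~\ref{triv center}(b) and then use the well-known classification of normal subgroups of $\S_n$.

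First I would note that $\bZ(\S_n)=1$ for $n>2$, so in particular for $n>4$. By Lemma~\ref{triv center}(b), this means that for $\chi\in\Irr(\S_n)$ we have $|\bZ(\chi)|=1$ if and only if $\chi$ is faithful, i.e.\ $\ker\chi=1$. So the whole statement reduces to showing that for $n>4$ and $\chi\in\Irr(\S_n)$, $\chi$ is faithful if and only if $\chi(1)>1$.

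For this, I would invoke the fact that for $n>4$ the alternating group $\A_n$ is simple and nonabelian, so the only normal subgroups of $\S_n$ are $1$, $\A_n$, and $\S_n$. Since $\ker\chi$ is normal, it must be one of these three. If $\chi(1)=1$, then $\chi$ is a homomorphism into $\mathbf{C}^\times$, which is abelian, so $\A_n=[\S_n,\S_n]\subseteq\ker\chi$, and hence $\chi$ is not faithful. Conversely, if $\chi$ is not faithful, then $\ker\chi$ is either $\A_n$ or $\S_n$, and in each case $\chi$ factors through the quotient $\S_n/\ker\chi$, which has order at most $2$; thus $\chi(1)=1$.

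I do not anticipate a real obstacle: the argument is a direct combination of Lemma~\ref{triv center}(b) with the classification of normal subgroups of $\S_n$ for $n>4$. The only thing to be a little careful about is the quick remark that a one-dimensional character must kill the commutator subgroup $\A_n$, which is what forces $\chi(1)=1$ in the non-faithful direction.
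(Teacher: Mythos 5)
Your proof is correct and follows essentially the same route as the paper's: reduce to faithfulness via Lemma~\ref{triv center}(b) (using $\bZ(\S_n)=1$) and invoke that the only nontrivial proper normal subgroup of $\S_n$ for $n>4$ is $\A_n$. The only cosmetic difference is in the final step: where you observe that a non-faithful irreducible factors through the order-two quotient $\S_n/\A_n$ and is therefore linear, the paper instead notes that $\chi_{\A_n}=k1_{\A_n}$ forces $k=1$ because the restriction of an irreducible of $\S_n$ to $\A_n$ has at most two irreducible constituents; both are one-line observations and equally valid.
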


\begin{proof}
  If $|\bZ(\chi)|=1$, then $\chi(1)>1$ because linear characters have full~center.
  Suppose 
  $|\bZ(\chi)|\neq 1$. Then by Lemma~\ref{triv center}, $\chi$ is not faithful. 
  Since the only nontrivial normal subgroups of $\S_n$ are $\A_n$
  and $\S_n$ when $n>4$, we therefore have 
  $\chi_{\A_n}=k1_{\A_n}$ for some~$k$. But 
  the restriction of any irreducible of
  $\S_n$ to $\A_n$ is either irreducible
  or the sum of two distinct irreducibles, so $k=1$.
\end{proof}

\begin{corollary}\label{exist}
  If $n>4$, then for each $\chi\in\Irr(\S_n)$ with $\chi(1)>1$, there exists
  a smallest positive integer $e(\chi)$ such that $c(\chi^{e(\chi)})=\Irr(\S_n)$.
\end{corollary}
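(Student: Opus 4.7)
The plan is to chain the two preceding lemmas together. First I would invoke Lemma~\ref{triv center nonlinear}: since $n>4$ and $\chi\in\Irr(\S_n)$ satisfies $\chi(1)>1$, we have $|\bZ(\chi)|=1$. This handles the only hypothesis needed to apply the existence lemma.

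Next I would apply Lemma~\ref{lemma e exists} directly: because $|\bZ(\chi)|=1$, there exists at least one positive integer $n$ with $c(\chi^n)=\Irr(\S_n)$. The set
\[
  \{n\in\bZ_{>0} : c(\chi^n)=\Irr(\S_n)\}
\]
is therefore a nonempty subset of the positive integers, so by the well-ordering principle it contains a least element, which we define to be $e(\chi)$. That is all the corollary asserts.

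There is no serious obstacle here: the corollary is essentially a bookkeeping consequence of Lemmas~\ref{lemma e exists} and~\ref{triv center nonlinear}, whose proofs have already done the real work (the asymptotic estimate for $\langle\chi^k,\theta\rangle$ and the classification of normal subgroups of $\S_n$, respectively). The only thing to be careful about is that the characterization in Lemma~\ref{triv center nonlinear} requires $n>4$, which is exactly the hypothesis of the corollary, so no extra case analysis is needed.
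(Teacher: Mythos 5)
Your proof is correct and is exactly the paper's argument: the paper's proof of this corollary is the one-line citation of Lemma~\ref{lemma e exists} and Lemma~\ref{triv center nonlinear}, and you have simply spelled out the chaining (plus the routine well-ordering step) in full.
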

\begin{proof}
  This follows from Lemma~\ref{lemma e exists} and Lemma~\ref{triv center nonlinear}.
\end{proof}

Let  ${\mathcal N(G)=\{\chi\in\Irr(G): \chi(1)>1\}}$ for any finite group $G$.
In view of Corollary~\ref{exist}, we shall write, for $n>4$, 
\[
  e(\S_n)=\max\{e(\chi): \chi\in\mathcal N(\S_n)\},\quad
  d(\S_n)=\max\{d(\chi): \chi\in\mathcal N(\S_n)\}.
\]
By Corollary~\ref{stab cor}, for each $k\geq e(\S_n)$, $j\geq d(\S_n)$, and $\chi\in\mathcal N(\S_n)$, 
\[c(\chi^k)=\Irr(\S_n),\quad c(\chi)\cup c(\chi^2)\cup\ldots\cup c(\chi^j)=\Irr(\S_n).\]

We remark that the analogue of Corollary~\ref{exist} holds also for many other interesting groups. For example, finite simple groups.
For each finite nonabelian simple group $G$, it would be interesting to
find the analogue of Theorem~\ref{main theorem}, i.e.\ to determine the values
\[
  e(G)=\max\{e(\chi): \chi\in\mathcal N(G)\},\quad
  d(G)=\max\{d(\chi): \chi\in\mathcal N(G)\}.
\]

\section{Proof of Theorem 1.1}
By a partition $\lambda$ of $n$, we mean an infinite weakly decreasing sequence of non-negative integers
$\lambda=(\lambda_1,\lambda_2,\ldots)$ such that $\sum \lambda_i=n$.
The number of nonzero terms in $\lambda$ is denoted $\ell(\lambda)$ and
we often abbreviate $\lambda$ to $(\lambda_1,\lambda_2,\ldots, \lambda_{\ell(\lambda)})$.
The conjugate of $\lambda$ is the partition $\lambda'=(\lambda'_1,\lambda'_2,\ldots)$ with 
 $\lambda'_i=|\{j: \lambda_j\geq i\}|$. 
We also identify $\lambda$ with its diagram of boxes, so $|\lambda|=n$ and if $\mu$ is another partition, then $|\lambda\smallsetminus \mu|$ is the number of boxes in $\lambda$ that lie outside of $\mu$. Finally, $\chi_\lambda$ denotes the irreducible character of $\S_n$ corresponding to $\lambda$ in the usual way.

We denote by $+$ and $+'$ the following two ways of adding partitions:
\begin{equation}
  \lambda+\mu=(\lambda_1+\mu_1,\lambda_2+\mu_2,\ldots),
  \qquad
  \lambda+'\mu=(\lambda'+\mu')'.
\end{equation}
With this notation, we have the following important lemma first observed by Klyachko in \cite{K} and proved  by Christandl--Harrow--Mitchison in \cite{CHM}. The second part of Lemma~\ref{semigroup} involving
$+'$ is a restatement of the first part because $\chi_\lambda\chi_\mu=\chi_{\lambda'}\chi_{\mu'}$, and this restatement also appears in \cite{S}, where it is used to study asymptotics. We use Lemma~\ref{semigroup} multiple times throughout this section. 

\begin{lemma}\label{semigroup} 
  If $\chi_\nu\in \const(\chi_\lambda\chi_\mu)$
  and
  ${\chi_\gamma\in \const(\chi_\alpha\chi_\beta)}$,
  then
  \begin{equation}
    \chi_{\nu+\gamma}\in \const(\chi_{\lambda+\alpha}\chi_{\mu+\beta})
    \quad\text{and}\quad
    \chi_{\nu+\gamma}\in \const(\chi_{\lambda+'\alpha}\chi_{\mu+'\beta}).
  \end{equation}
\end{lemma}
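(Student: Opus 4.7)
The plan is to pass through Schur--Weyl duality and translate the lemma into the semigroup property for Kronecker coefficients of general linear groups, which in turn I would prove by multiplying highest weight vectors. Fix $d,e$ large enough to accommodate all partitions in sight, write $V_\lambda^{\GL_N}$ for the irreducible polynomial $\GL_N$-representation of highest weight $\lambda$, and consider the tensor product embedding $\GL_d\times\GL_e\hookrightarrow\GL_{de}$. Comparing the Schur--Weyl decompositions of $(V\otimes W)^{\otimes n}$ and $V^{\otimes n}\otimes W^{\otimes n}$ for $V=\mathbf{C}^d$, $W=\mathbf{C}^e$ shows that $\langle\chi_\lambda\chi_\mu,\chi_\nu\rangle$ is the multiplicity of $V_\lambda^{\GL_d}\boxtimes V_\mu^{\GL_e}$ in $V_\nu^{\GL_{de}}|_{\GL_d\times\GL_e}$. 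So the hypothesis $\chi_\nu\in\const(\chi_\lambda\chi_\mu)$ amounts to the existence of a nonzero $\GL_d\times\GL_e$-highest weight vector $x\in V_\nu^{\GL_{de}}$ of weight $(\lambda,\mu)$, and similarly one obtains $y\in V_\gamma^{\GL_{de}}$ of weight $(\alpha,\beta)$.

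The key move would be to push $x\otimes y$ into the Cartan component $V_{\nu+\gamma}^{\GL_{de}}\subset V_\nu^{\GL_{de}}\otimes V_\gamma^{\GL_{de}}$ via the canonical $\GL_{de}$-equivariant projection $\pi$. Being $\GL_{de}$-equivariant, $\pi$ is in particular $\GL_d\times\GL_e$-equivariant, so $\pi(x\otimes y)$, if nonzero, is a $\GL_d\times\GL_e$-highest weight vector in $V_{\nu+\gamma}^{\GL_{de}}$ of weight $(\lambda+\alpha,\mu+\beta)$, which yields the desired $\chi_{\nu+\gamma}\in\const(\chi_{\lambda+\alpha}\chi_{\mu+\beta})$. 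The main obstacle is to verify $\pi(x\otimes y)\neq 0$. I would do this via Borel--Weil: identifying $V_\nu^{\GL_{de}}=H^0(\GL_{de}/B,\mathcal L_\nu)$ turns $\pi$ into ordinary multiplication of sections $s_x\cdot s_y$, and the product of two nonzero sections of line bundles on the integral projective variety $\GL_{de}/B$ is nonzero.

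Finally, the $+'$ statement would follow from the $+$ statement by conjugation. Using $\chi_{\lambda'}=\chi_\lambda\cdot\mathrm{sgn}$ and $\mathrm{sgn}^2=1$ gives $\chi_\lambda\chi_\mu=\chi_{\lambda'}\chi_{\mu'}$, so the hypotheses are equivalent to $\chi_\nu\in\const(\chi_{\lambda'}\chi_{\mu'})$ and $\chi_\gamma\in\const(\chi_{\alpha'}\chi_{\beta'})$. The $+$ part applied to the primed partitions then yields $\chi_{\nu+\gamma}\in\const(\chi_{\lambda'+\alpha'}\chi_{\mu'+\beta'})$, which collapses to $\chi_{\nu+\gamma}\in\const(\chi_{\lambda+'\alpha}\chi_{\mu+'\beta})$ by the identity $(\lambda'+\alpha')'=\lambda+'\alpha$ and one more application of the sign trick.
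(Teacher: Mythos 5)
Your argument is correct in outline, but note that the paper does not actually prove this lemma: it is quoted as a known result, first observed by Klyachko \cite{K} and proved by Christandl--Harrow--Mitchison \cite{CHM}, and the only piece of argument given in the text is the reduction of the $+'$ statement to the $+$ statement via $\chi_\lambda\chi_\mu=\chi_{\lambda'}\chi_{\mu'}$ --- which your final paragraph reproduces exactly. What you write for the $+$ part is essentially a reconstruction of the proof in \cite{CHM}: Schur--Weyl duality identifies $\langle\chi_\lambda\chi_\mu,\chi_\nu\rangle$ with the multiplicity of $V_\lambda^{\GL_d}\boxtimes V_\mu^{\GL_e}$ in $V_\nu^{\GL_{de}}|_{\GL_d\times\GL_e}$, and positivity is propagated by multiplying highest weight vectors into the Cartan component; this works uniformly even though $\nu$ and $\gamma$ are partitions of different integers, since the two Schur--Weyl decompositions live in different degrees. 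The one step you should spell out is why the Cartan projection $\pi$ may be replaced by multiplication of sections: the multiplication map $H^0(\mathcal L_\nu)\otimes H^0(\mathcal L_\gamma)\to H^0(\mathcal L_{\nu+\gamma})$ is equivariant with irreducible target, so by Schur's lemma it annihilates every isotypic component of the source except the multiplicity-one Cartan component, on which it is a nonzero multiple of $\pi$ because the product of the two highest weight sections is the highest weight section of $\mathcal L_{\nu+\gamma}$. With that in place, integrality of $\GL_{de}/B$ gives $s_x s_y\neq 0$, hence $\pi(x\otimes y)\neq 0$, as you say. So your proof is sound and makes the lemma self-contained where the paper relies on a citation; the cost is the reductive-group machinery, the benefit is independence from the external reference.
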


For integers $0\leq u\leq n$, we denote by $\theta_{n,u}$ the induced character 
\begin{equation}
  \theta_{n,u}=(1_{\S_{n-u}})^{\S_n}.
\end{equation}

\begin{lemma}\label{theta move}
  For integers $0\leq u\leq n$ and any $\chi_\lambda\in\Irr(\S_n)$,
  \begin{equation}
    \const(\pchar_{n,u}^v\chi_{\lambda})
    =
    \{\chi_\mu :   |\lambda\smallsetminus \mu|\leq uv \},\quad v\in\mathbb N.
  \end{equation}
\end{lemma}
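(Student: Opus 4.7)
The plan is to induct on $v$, treating the base case $v=1$ by Frobenius reciprocity and the inductive step by a combinatorial chain construction. For $v=1$, the projection formula gives $\theta_{n,u}\chi_\lambda=\Ind_{\S_{n-u}}^{\S_n}\mathrm{Res}_{\S_{n-u}}^{\S_n}\chi_\lambda$, so Frobenius reciprocity together with the iterated branching rule yields
\[\langle\theta_{n,u}\chi_\lambda,\chi_\mu\rangle_{\S_n}=\langle\mathrm{Res}\chi_\lambda,\mathrm{Res}\chi_\mu\rangle_{\S_{n-u}}=\sum_{\nu\vdash n-u,\ \nu\subseteq\lambda\cap\mu}f^{\lambda/\nu}f^{\mu/\nu},\]
where $f^{\sigma/\nu}$ counts standard Young tableaux of shape $\sigma/\nu$. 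Since $\lambda\cap\mu$ (the componentwise minimum) is itself a partition, such a $\nu$ of size $n-u$ exists iff $|\lambda\cap\mu|\ge n-u$, equivalently $|\lambda\smallsetminus\mu|\le u$. This settles $v=1$.

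For $v\ge 2$, I would write $\theta_{n,u}^v\chi_\lambda=\theta_{n,u}\cdot(\theta_{n,u}^{v-1}\chi_\lambda)$ and argue both containments. The forward direction $\subseteq$ is routine: any constituent $\chi_\mu$ of the product appears in $c(\theta_{n,u}\chi_\nu)$ for some $\chi_\nu\in c(\theta_{n,u}^{v-1}\chi_\lambda)$, so the base case and inductive hypothesis give $|\nu\smallsetminus\mu|\le u$ and $|\lambda\smallsetminus\nu|\le u(v-1)$, and the triangle inequality $|\lambda\smallsetminus\mu|\le|\lambda\smallsetminus\nu|+|\nu\smallsetminus\mu|$ (immediate from the diagram interpretation) gives $|\lambda\smallsetminus\mu|\le uv$.

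For the reverse direction, given $\mu$ with $|\lambda\smallsetminus\mu|\le uv$, I need to produce an intermediate $\nu\vdash n$ satisfying $|\lambda\smallsetminus\nu|\le u(v-1)$ and $|\nu\smallsetminus\mu|\le u$, since then $\chi_\mu\in c(\theta_{n,u}\chi_\nu)\subseteq c(\theta_{n,u}^v\chi_\lambda)$ by the base case and induction. Existence of $\nu$ will reduce to the following combinatorial claim: for $\lambda\ne\mu$ of the same size, one can find a partition $\alpha$ obtained from $\lambda$ by moving a single box with $|\alpha\smallsetminus\mu|=|\lambda\smallsetminus\mu|-1$. Iterating produces a chain $\lambda=\alpha^{(0)},\alpha^{(1)},\ldots,\alpha^{(s)}=\mu$ with $s=|\lambda\smallsetminus\mu|$, and $\nu=\alpha^{(\max(0,s-u))}$ will work.

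To prove the combinatorial claim, I plan to let $j$ be the largest index with $\lambda_j>\mu_j$ and $i$ the smallest with $\lambda_i<\mu_i$; maximality of $j$ forces $\lambda_{j+1}\le\mu_{j+1}\le\mu_j<\lambda_j$ (a removable corner in row $j$) and minimality of $i$ forces $\lambda_{i-1}\ge\mu_{i-1}\ge\mu_i>\lambda_i$ (an addable corner in row $i$). Moving one box from row $j$ to row $i$ then yields a valid partition $\alpha$, and a short $(\cdot-\cdot)^+$ computation shows $|\alpha\smallsetminus\mu|=|\lambda\smallsetminus\mu|-1$. The main technical care lies in the case $i=j+1$, where the removal and addition interact on adjacent rows; here the bound $\lambda_j\ge\mu_j+1\ge\mu_{j+1}+1\ge\lambda_{j+1}+2$ ensures the operation is well-defined.
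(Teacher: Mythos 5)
Your proof is correct and follows essentially the same route as the paper: the $v=1$ case via the projection formula $\theta_{n,u}\chi=(\chi_{\S_{n-u}})^{\S_n}$ together with the branching rule, followed by iteration. The paper dismisses the iteration with ``repeatedly applying this special case gives the general case,'' whereas you explicitly supply the interpolation step (the single-box-move chain producing an intermediate $\nu$ with $|\lambda\smallsetminus\nu|\le u(v-1)$ and $|\nu\smallsetminus\mu|\le u$), which is exactly the detail needed to make that reverse inclusion rigorous.
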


\begin{proof}
  For any class function $\chi$ of $\S_n$, we have 
  $\pchar_{n,u}\chi=(\chi_{\S_{n-u}})^{\S_n}$.
  From this and the usual combinatorial rule for restricting and inducing characters between $\S_{n-1}$ and $\S_n$, we obtain the $v=1$ case of the claim.
  Repeatedly applying this special case gives the general case.
\end{proof}

We say that a partition $\lambda$ is a rectangle or is rectangular if \[\lambda_1=\lambda_2=\ldots=\lambda_{\ell(\lambda)}.\]

\begin{lemma}\label{non-rectangle}
  Let $\lambda$ be a non-rectangular partition of $n$. Then
  ${c(\pchar_{n,2})\subseteq c(\chi_\lambda^2)}$.
\end{lemma}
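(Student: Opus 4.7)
The plan is to verify directly that each irreducible constituent of $\pchar_{n,2}$ lies in $c(\chi_\lambda^2)$. By Lemma~\ref{theta move} applied with the trivial character and $v = 1$, $c(\pchar_{n,2}) = \{\chi_\mu : \mu_1 \ge n-2\}$, which for $n \ge 4$ equals $\{\chi_{(n)}, \chi_{(n-1,1)}, \chi_{(n-2,2)}, \chi_{(n-2,1,1)}\}$; the only non-rectangular partition of $n = 3$ is $\lambda = (2,1)$, handled by direct computation of $\chi_{(2,1)}^2$. So assume $n \ge 4$, and by Frobenius reciprocity it suffices to show $\langle \chi_\lambda^2, \chi_\mu\rangle \ge 1$ for each such $\mu$.

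The case $\mu = (n)$ is immediate. The case $\mu = (n-1,1)$ reduces via $\pchar_{n,1} = \chi_{(n)} + \chi_{(n-1,1)}$ together with the projection formula $\chi_\lambda \cdot \pchar_{n,1} = \Ind_{\S_{n-1}}^{\S_n}(\chi_\lambda|_{\S_{n-1}})$ and the branching rule to the identity $\langle \chi_\lambda^2, \chi_{(n-1,1)}\rangle = r - 1$, where $r$ is the number of removable corners of $\lambda$; this is $\ge 1$ since non-rectangularity forces $r \ge 2$. For $\mu \in \{(n-2,2),(n-2,1,1)\}$, I would use the Young decompositions
\[
(1 \boxtimes 1)_{\S_{n-2}\times \S_2}^{\S_n} = \chi_{(n)} + \chi_{(n-1,1)} + \chi_{(n-2,2)}, \qquad
(1 \boxtimes \mathrm{sgn})_{\S_{n-2}\times \S_2}^{\S_n} = \chi_{(n-1,1)} + \chi_{(n-2,1,1)},
\]
combined with Pieri's rule for $\chi_\lambda|_{\S_{n-2}\times \S_2}$ (the summands $\chi_\beta \boxtimes \chi_{(2)}$ and $\chi_\beta \boxtimes \chi_{(1,1)}$ appearing with multiplicity $1$ exactly when $\lambda/\beta$ is a horizontal or vertical $2$-strip). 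Frobenius reciprocity then yields
\[
\langle \chi_\lambda^2, \chi_{(n-2,1,1)}\rangle = (r-1)^2, \qquad \langle \chi_\lambda^2, \chi_{(n-2,2)}\rangle = r^2 - 2r + s,
\]
where $s$ counts the rows $i$ with $\lambda_i - \lambda_{i+1} \ge 2$ plus the columns $j$ with $\lambda'_j - \lambda'_{j+1} \ge 2$.

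Non-rectangularity makes $(r-1)^2 \ge 1$. For the other expression, $r \ge 3$ gives $r^2 - 2r \ge 3$; the boundary case is $r = 2$, in which $\lambda = (a^k, b^m)$ with $a > b \ge 1$, and a short classification shows $s \ge 1$ unless $\lambda = (2,1)$ (already handled). The main obstacle is this last step: the formula $r^2 - 2r + s$ is tight, vanishing precisely in the excluded borderline family $r = 2$, $s = 0$, so the only substantive work sits in checking that this family consists only of $\lambda = (2,1)$.
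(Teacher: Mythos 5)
Your proof is correct, but it takes a genuinely different route from the paper's. After the common ground (the $n=3$ check, the identification of the four constituents of $\pchar_{n,2}$, the trivial case $\chi_{(n)}$, and the identity $\langle\chi_\lambda^2,\chi_{(n-1,1)}\rangle=r-1$), the paper handles $\chi_{(n-2,1,1)}$ and $\chi_{(n-2,2)}$ by the Klyachko/Christandl--Harrow--Mitchison semigroup property (Lemma~\ref{semigroup}): it writes $\lambda$ as a $+$/$+'$ combination of small pieces and lifts base-case memberships such as $\chi_{(1,1,1)}\in c(\chi_{(2,1)}^2)$ and $\chi_{(2,2)}\in c(\chi_{(2,2)}^2)$, which proves positivity without computing any multiplicities. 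You instead compute the multiplicities exactly: restricting to $\S_{n-2}\times\S_2$ and applying Pieri's rule gives $\langle\chi_\lambda^2,\chi_{(n-2,1,1)}\rangle=(r-1)^2$ and $\langle\chi_\lambda^2,\chi_{(n-2,2)}\rangle=r^2-2r+s$; I checked the derivation (via $H+V$ and the count $2\binom{r}{2}$ of ordered mixed terms) and spot-checked it on $\lambda=(3,1)$ and $(2,1,1)$, and your boundary classification is right: $r=2$ forces $\lambda=(a^k,b^m)$ with $a>b$, and $s=0$ forces $a-b=1$, $b=1$, $k=m=1$, i.e.\ $\lambda=(2,1)$, which is excluded for $n\geq 4$. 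The trade-off: your argument yields the exact Kronecker coefficients $g(\lambda,\lambda,\mu)$ for depth-two $\mu$ (strictly more information) at the cost of a more delicate computation and a case analysis that is genuinely tight at $r=2$, $s=0$; the paper's semigroup argument is softer, needs only a few finite verifications, and is the same mechanism reused in Lemma~\ref{rectangle}, so it keeps the whole section uniform. You should still write out the Pieri computation and the $r=2$ classification in full rather than leaving them as "I would" and "a short classification shows," but there is no gap in the underlying argument.
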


\begin{proof}
  The case $n=3$ holds by inspection, so assume $n\geq 4$. 
   Then 
  \[c(\pchar_{n,2})=\{\chi_{(n)},\chi_{(n-1,1)},\chi_{(n-2,2)},\chi_{(n-2,1,1)}\}.\]
  Now, $\chi_{(n)}$ belongs to $c(\chi_\lambda^2)$
  because
  $\langle \chi_\lambda^2,\chi_{(n)}\rangle=1$.
  For $\chi_{(n-1,1)}$, by using Frobenius reciprocity, we have
  \[
    \langle\chi_\lambda^2,\chi_{(n-1,1)}\rangle
    =
    \langle \chi_\lambda^2,\pchar_{n,1}-1\rangle
    =
    \langle (\chi_\lambda)_{S_{n-1}},(\chi_\lambda)_{S_{n-1}}\rangle-1
    =
    |\{i: \lambda_i>\lambda_{i+1}\}|-1.
  \]
  Since $\lambda$ is not a rectangle, the last quantity is positive.
  
  For $\chi_{(n-2,1,1)}$, we first write $\lambda$ as
  \[\lambda=\alpha+'\gamma\]
  with $\alpha=(\lambda_2,\lambda_3,\ldots,\lambda_{\ell(\lambda)-1})$
  and $\gamma=(\lambda_1,\lambda_{\ell(\lambda)})$. 
  Since $\lambda$ is not a rectangle, 

  \[\gamma=\beta+(2,1)\]
  for some partition $\beta$. 
  Since $\chi_{(|\beta|)}\in c(\chi_\beta^2)$ and $\chi_{(1,1,1)}\in c(\chi_{(2,1)}^2)$,
  we have by Lemma~\ref{semigroup},
  \begin{equation}\label{bog}
    \chi_{(|\beta|)+(1,1,1)}\in c(\chi_\gamma^2).
  \end{equation}
  From \eqref{bog} and the fact that $\chi_{(|\alpha|)}\in c(\chi_\alpha^2)$, another application of Lemma~\ref{semigroup} yields
  \[
    \chi_{(n-2,1,1)}
    =
    \chi_{(|\alpha|)+(|\beta|)+(1,1,1)}
    \in
    c(\chi_{\alpha+'\gamma}^2)
    =
    c(\chi_{\lambda}^2).
  \]

  For $\chi_{(n-2,2)}$, we consider separately the cases
  $\lambda_2=1$ and $\lambda_2\geq 2$.
  If $\lambda_2\geq 2$, then 
  \[\lambda=(\alpha+(2,2))+'\beta\]
  for some partitions $\alpha$ and $\beta$. 
  In this case, since $\chi_{(|\alpha|)}\in c(\chi_\alpha^2)$,
  ${\chi_{(2,2)}\in c(\chi_{(2,2)}^2)}$,
  and $\chi_{(|\beta|)}\in c(\chi_\beta^2)$,
  we conclude that
  \[
    \chi_{(n-2,2)}
    =
    \chi_{(|\alpha|)+(2,2)+(|\beta|)}
    \in
    c(\chi_{(\alpha+(2,2))+'\beta}^2)
    =
    c(\chi_\lambda^2).
  \]
  If $\lambda_2=1$, let $\alpha\subseteq \lambda$ be a partition of $4$ with at least two nonzero parts, so that $\alpha$ is either $(2,1,1)$ or $(3,1)$, and 
  \[\lambda=(\alpha+\beta)+'\gamma\]
  for some partitions $\beta$ and $\gamma$. 
  Since $\chi_{(2,2)}\in c(\chi_\alpha^2)$, $\chi_{(|\beta|)}\in c(\chi_\beta^2)$, and
  $\chi_{(|\gamma|)}\in c(\chi_\gamma^2)$, we then have 
  \[\chi_{(n-2,2)}=\chi_{(2,2)+(|\beta|)+(|\gamma|)}\in c(\chi_\lambda^2).\]
  This concludes the proof.
\end{proof}

\begin{lemma}\label{rectangle}
  Let $\lambda$ be a rectangular partition of $n>6$ such that $\chi_\lambda(1)\neq 1$. Then
  ${c(\pchar_{n,5})\subseteq c(\chi_\lambda^4)}$. 
\end{lemma}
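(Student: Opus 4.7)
By Lemma~\ref{theta move}, $\const(\pchar_{n,5}) = \{\chi_\mu : \mu_1 \geq n-5\}$, so we need every such $\chi_\mu$ in $\const(\chi_\lambda^4)$. Since $\chi_\lambda\otimes\mathrm{sgn}=\chi_{\lambda'}$ yields $\chi_\lambda^4=\chi_{\lambda'}^4$, we may conjugate at will; write $\lambda=(a^b)$ with $a,b\geq 2$ and $n=ab>6$. The proof of Lemma~\ref{non-rectangle} does not adapt directly, because every $+$ or $+'$ decomposition of a rectangle into two partitions yields two rectangles (the constraints $\alpha_i+\beta_i=a$ combined with monotonicity force $\alpha_i=\alpha_{i+1}$ throughout), so Lemma~\ref{non-rectangle} cannot be invoked on a summand of $\lambda$.

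I would instead work with $\chi_\lambda^4=\chi_\lambda^2\cdot\chi_\lambda^2$ and produce non-rectangular constituents of $\chi_\lambda^2$. Once a non-rectangular $\chi_\xi$ appears in $\const(\chi_\lambda^2)$, Lemma~\ref{non-rectangle} supplies $\const(\chi_\xi^2)\supseteq\const(\pchar_{n,2})$, whence $\const(\chi_\lambda^4)\supseteq\const(\chi_\xi^2)\supseteq\const(\pchar_{n,2})$ via Lemma~\ref{c multiply}. To construct such a $\xi$, use the auxiliary decomposition $(a,a)=(a-2,a-2)+(2,2)$ (valid for $a\geq 2$, with empty first summand when $a=2$) together with the $\S_4$-level fact $\chi_{(2,2)}\in\const(\chi_{(2,2)}^2)$; Lemma~\ref{semigroup} then gives $\chi_{(2a-2,2)}\in\const(\chi_{(a,a)}^2)$, and a further application of Lemma~\ref{semigroup} under $\lambda=(a,a)+'(a^{b-2})$ produces an explicit non-rectangular constituent of $\chi_\lambda^2$ for $a\geq 3$; the case $a=2$ reduces to the case $a\geq 3$ by conjugating to $\lambda'=(b,b)$ with $b\geq 4$.

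To push the containment from $\const(\pchar_{n,2})$ up to the target $\const(\pchar_{n,5})$, I would produce several distinct non-rectangular constituents $\chi_{\xi_1},\ldots,\chi_{\xi_r}\in\const(\chi_\lambda^2)$ by varying the inner small piece (for example, using $(3,1)$, $(2,1,1)$, or $(1^4)$ in place of $(2,2)$, or choosing other decompositions of $\lambda$), and observe that every Kronecker product $\chi_{\xi_i}\chi_{\xi_j}$ has its constituents in $\const(\chi_\lambda^4)$. The main obstacle is the hook $\chi_{(n-1,1)}$: by the Frobenius argument used in the proof of Lemma~\ref{non-rectangle}, $\langle\chi_\lambda^2,\chi_{(n-1,1)}\rangle=0$ for every rectangular $\lambda$, so $\chi_{(n-1,1)}$ is never a constituent of $\chi_\lambda^2$ and can arise in $\chi_\lambda^4$ only via a cross-product $\chi_{\xi_i}\chi_{\xi_j}$; verifying that such products, together with the inclusions $\const(\chi_{\xi_i}^2)\supseteq\const(\pchar_{n,2})$, cover $\chi_{(n-1,1)}$ and every other $\chi_\mu$ with $\mu_1\geq n-5$, uniformly in the rectangular shape, is the most delicate step.
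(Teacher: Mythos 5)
Your opening analysis is sound: a rectangle admits no decomposition into non-rectangular summands under $+$ or $+'$, so Lemma~\ref{non-rectangle} cannot be applied to $\lambda$ directly, and the right move is to manufacture a non-rectangular constituent of $\chi_\lambda^2$ and square it. That is in fact the skeleton of the paper's argument. But there is a genuine gap at exactly the point you flag as ``the most delicate step,'' and it is not merely a matter of missing detail: the constituent you actually construct cannot do the job. Your $\xi=(n-2,2)$ is a constituent of $\theta_{n,2}$, so $\chi_\xi^2$ is a summand of $\theta_{n,2}^2$ and hence $c(\chi_\xi^2)\subseteq c(\theta_{n,2}^2)=\{\chi_\mu:\mu_1\geq n-4\}$ by Lemma~\ref{theta move}. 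Consequently $c(\chi_\xi^2)$ can never contain the characters $\chi_\mu$ with $\mu_1=n-5$ (for instance $\chi_{(n-5,5)}$ or $\chi_{(n-5,1,1,1,1,1)}$), all of which lie in the target $c(\theta_{n,5})$. The same depth obstruction applies to any product $\chi_{\xi_i}\chi_{\xi_j}$ with both factors taken from $c(\theta_{n,2})$, so you need at least one constituent of $\chi_\lambda^2$ lying strictly deeper than $\theta_{n,2}$, and you must then verify that all $19$ shapes $\mu$ with $\mu_1\geq n-5$ actually occur, uniformly over every rectangular $\lambda$. None of this is carried out. (Also, the hook $\chi_{(n-1,1)}$ is not the main obstacle you make it out to be: it already lies in $c(\theta_{n,2})\subseteq c(\chi_\xi^2)$ for any non-rectangular $\xi\in c(\chi_\lambda^2)$, so it is the easy case of your own scheme.)

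For comparison, the paper resolves this by choosing the single, sufficiently deep constituent $\mu=(n-4,2,2)$: it first shows $\chi_{(n-4,2,2)}\in c(\chi_\lambda^2)$ for every rectangle (splitting into the cases $\ell(\lambda)=2$ and $\ell(\lambda)\geq 3$ via the seeds $\chi_{(4,2,2)}\in c(\chi_{(4,4)}^2)$ and $\chi_{(5,2,2)}\in c(\chi_{(3,3,3)}^2)$, after normalizing $\lambda_1\geq\ell(\lambda)$ by conjugation), and then reduces the inclusion $c(\theta_{n,5})\subseteq c(\chi_{(n-4,2,2)}^2)$ to a finite verification at $n=10$, namely that $\chi_\nu\in c(\chi_{(6,2,2)}^2)$ for each of the $19$ shapes $\nu\in\Lambda$, propagated to all $n\geq 10$ by Lemma~\ref{semigroup}. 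To repair your proof you would need both ingredients: a non-rectangular constituent of $\chi_\lambda^2$ deep enough that its square can reach $\{\chi_\mu:\mu_1\geq n-5\}$, and a concrete, finitely checkable argument that it does so for every rectangular $\lambda$ with $n>6$.
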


\begin{proof}
  The lemma holds by inspection if $n<10$, so assume $n\geq 10$.
  
  Let $\mu=(n-4,2,2)$. We claim that
  \begin{equation}\label{first claim}
    \chi_\mu\in c(\chi_\lambda^2).
  \end{equation}
  Since $\chi_\lambda^2=\chi_{\lambda'}^2$,
  we may assume that
  $\lambda_1\geq \ell(\lambda)$.
  To establish \eqref{first claim},
  first suppose $\ell(\lambda)=2$.
  Then for some partition $\alpha$,
  \[
    \lambda=(4,4)+\alpha.
  \]
  Since
  $\chi_{(4,2,2)}\in c(\chi_{(4,4)}^2)$
  and
  $\chi_{(|\alpha|)}\in c(\chi_\alpha^2)$, 
  \[
    \chi_\mu
    =
    \chi_{(4,2,2)+(|\alpha|)}
    \in
    c(\chi_{(4,4)+\alpha}^2)
    =
    c(\chi_{\lambda}^2).
  \]
  If instead $\ell(\lambda)\geq 3$,
  then for some partitions $\alpha$ and $\beta$, 
  \[
    \lambda=((3,3,3)+\alpha)+'\beta.
  \]
  Since $\chi_{(5,2,2)}\in c(\chi_{(3,3,3)}^2)$,
  \[
    \chi_\mu
    =
    \chi_{(5,2,2)+(|\alpha|)+(|\beta|)}
    \in
    c(\chi_{((3,3,3)+\alpha)+'\beta}^2)
    =
    c(\chi_{\lambda}^2).
  \]
  This establishes the claim in \eqref{first claim}.

  We next claim that
  \begin{equation}\label{second claim}
    c(\pchar_{n,5})\subseteq c(\chi_\mu^2).
  \end{equation}
  Let
  \[\Lambda=\{\alpha+'(10-|\alpha|) :  |\alpha|\leq 5\}.\]
  Then
  \begin{equation}\label{Lambda set}
    c(\pchar_{n,5})=\{\chi_{\nu+(n-10)} : \nu\in\Lambda\}.
  \end{equation}
  By inspection (see Table~\ref{table}, for example), we find
  that $\chi_\nu\in c(\chi_{(6,2,2)}^2)$ for each $\nu\in \Lambda$.
  So by Lemma~\ref{semigroup}, for each $\nu\in\Lambda$, we have
  \[\chi_{\nu+(n-10)}\in c(\chi_{(6,2,2)+(n-10)}^2)=c(\chi_\mu^2).\]
  Therefore, by \eqref{Lambda set},
  \[c(\pchar_{n,5})\subseteq c(\chi_\mu^2).\]

  Combining \eqref{first claim} and \eqref{second claim} gives
  \[
    c(\theta_{n,5})
    \subseteq
    c(\chi_\mu^2)
    \subseteq
    c(\chi_\lambda^4).
  \]
  This concludes the proof.
  \end{proof}

\begin{center}
  \begin{table}[H]\footnotesize
    \caption{\label{table}}
\begin{tabular}{lcclc}
  \toprule
  $\nu\in \Lambda$ & $\langle \chi_\nu,\chi_{(6,2,2)}^2\rangle$ &&
    $\nu\in\Lambda$ & $\langle \chi_\nu,\chi_{(6,2,2)}^2\rangle$ \\
  \cmidrule{1-2}\cmidrule{4-5}
  $(10)$      & $1$ &  &$(6,2,1,1)$     & $7$  \\
  $(9,1)$     & $1$ &  &$(6,1,1,1,1)$   & $4$  \\
  $(8,2)$     & $3$ &  &$(5,5)$         & $1$  \\
  $(8,1,1)$   & $1$ &  &$(5,4,1)$       & $7$  \\
  $(7,3)$     & $3$ &  &$(5,3,2)$       & $8$  \\
  $(7,2,1)$   & $5$ &  &$(5,3,1,1)$     & $10$ \\
  $(7,1,1,1)$ & $3$ &  &$(5,2,2,1)$     & $8$  \\
  $(6,4)$     & $4$ &  &$(5,2,1,1,1)$   & $7$  \\
  $(6,3,1)$   & $7$ &  &$(5,1,1,1,1,1)$ & $2$  \\
  $(6,2,2)$   & $7$ &  &                &      \\
  \bottomrule
\end{tabular}
\end{table}
\end{center}

\begin{proof}[{\bf Proof of Theorem 1.1}]
  Let $\epsilon=(n-1,1)$. Then $\chi_\epsilon=\theta_{n,1}-1$. 
  So by Lemma~\ref{c multiply}, for any $k\geq 1$, we have 
  $c(\chi_\epsilon^{k})\subseteq c(\theta_{n,1}^{k-1}\chi_\epsilon)$.
  By Lemma~\ref{theta move}, if $1\leq k\leq n-2$, then
  $c(\theta_{n,1}^{k-1}\chi_\epsilon)$ does not
  contain $\chi_{(1,1,\ldots,1)}$. So 
  \[n-1\leq d(\S_n)\leq e(\S_n).\]
  It therefore remains to show that $e(\chi_\lambda)\leq n-1$ for all
  $\chi_\lambda\in\Irr(S_n)$ with $\chi_\lambda(1)>1$. We consider separately the case where $\lambda$ is a rectangle and the case where $\lambda$ is not a rectangle.

  Suppose that $\lambda$ is a partition of $n$ that is not a rectangle.
  By Lemma~\ref{non-rectangle},
  \begin{equation}\label{nr-rep}
    c(\theta_{n,2})\subseteq c(\chi_\lambda^2).
  \end{equation}
  Suppose first that $n$ is odd and let $m=\frac{n-1}{2}$. Then by \eqref{nr-rep},
  \begin{equation}\label{theta n2 subset}
    c(\theta_{n,2}^m)\subseteq c(\chi_\lambda^{2m})=c(\chi_\lambda^{n-1}).
  \end{equation}
  By Lemma~\ref{theta move},
  \begin{equation}\label{theta n2 cover}
    c(\theta_{n,2}^m)=c(\theta_{n,2}^m\chi_{(n)})=\Irr(\S_n).
  \end{equation}
  Therefore, by \eqref{theta n2 subset} and \eqref{theta n2 cover}, 
  \[c(\chi_\lambda^{n-1})=\Irr(\S_n).\]
  Now suppose that $n$ is even and let $m=\frac{n-2}{2}$. Then by \eqref{nr-rep},
  \begin{equation}\label{theta n even m}
    c(\theta_{n,2}^m\chi_\lambda)\subseteq c(\chi_\lambda^{n-1}).
  \end{equation}
   By Lemma~\ref{theta move},
   \[c(\theta_{n,2}^m)=c(\theta_{n,2}^m\chi_{(n)})=\Irr(\S_n)\smallsetminus \{\zeta\},\]
   where $\zeta=\chi_{(1,1,\ldots,1)}$ is the sign character.
   Denoting by $\rho$ the character of the regular representation of $\S_n$, it follows that
   \[c(\theta_{n,2}^m)=c(\rho-\zeta).\]
   Therefore, by Lemma~\ref{c multiply},
   \begin{equation}
     c(\theta_{n,2}^m\chi_\lambda)=c((\rho-\zeta)\chi_\lambda).
   \end{equation}
   Since $\chi_\lambda(1)>1$, each $\varphi\in\Irr(\S_n)$ satisfies
   \begin{equation}\label{all positive}
     \langle (\rho-\zeta)\chi_\lambda,\varphi\rangle
     =
     \varphi(1)\chi_\lambda(1)-\langle \chi_{\lambda'},\varphi\rangle>0.
   \end{equation}
   Hence, by \eqref{theta n even m}--\eqref{all positive},
   we have in this case again 
   \[c(\chi_\lambda^{n-1})=\Irr(\S_n).\]
   This concludes the case where $\lambda$ is not a rectangle.

   Suppose now that $\lambda$ is a rectangular partition of $n$ such that $\chi_\lambda(1)>1$. 
   If ${n\leq 12}$, then  
   $c(\chi_\lambda^{n-1})=\Irr(\S_n)$ by inspection.
   Assume $n>12$. 
   Let $m=\lfloor \frac{n-1}{4}\rfloor$. Let $k=4m$.
   Then by Lemma~\ref{rectangle},
   \[c(\theta_{n,5}^m)\subseteq c(\chi_\lambda^k).\]
   By Lemma~\ref{theta move} and the fact that $5m\geq n-1$, we have
   \[c(\theta_{n,5}^m)=\Irr(\S_n).\]
   Therefore, $c(\chi_\lambda^k)=\Irr(\S_n)$. Since $k\leq n-1$,
   we have by Lemma~\ref{c multiply},
   \[c(\chi_\lambda^{n-1})=\Irr(\S_n).\]
   This concludes the proof of Theorem~\ref{main theorem}.
 \end{proof}

 \subsection*{Acknowledgments}
It is a pleasure to thank Gunter Malle and TU Kaiserslautern for their hospitality.
 \medskip

 \noindent
 Travel facilitated in part by DFG grant 286237555.


\begin{thebibliography}{6}
 \bibitem{Br} R.\ Brauer,
   A note on theorems of Burnside and Blichfeldt.
   Proc. Amer. Math. Soc. {\bf 15} (1964), 31--34.
   
 \bibitem{Bu} W.\ Burnside,
   Theory of groups of finite order, Cambridge Univ.\ Press, 1911.
   
 \bibitem{CHM} M. Christandl, A.\ W.\ Harrow, G.\ Mitchison,
   Nonzero Kronecker coefficients and what they tell us about spectra.
   Commun. Math. Phys. {\bf 270} (2007) 575--585.
   
 \bibitem{K}
   A.\ Klyachko,
   Quantum marginal problem and representations of the symmetric group.
   arxiv:quant-ph/0409113.

 \bibitem{S} M. Sellke,
   Covering Irrep($S_n$) with tensor products and powers.
   Math. Ann. {\bf 388} (2024), 831--865.
   
 \bibitem{St} R.\ Steinberg,
   Complete sets of representations of algebras.
   Proc. Am. Math. Soc. {\bf 13} (1962) 746--747.
\end{thebibliography}
\end{document}